\renewcommand*\subjclass[2][2000]{%
  \def\@subjclass{#2}%
  \@ifundefined{subjclassname@#1}{%
    \ClassWarning{\@classname}{Unknown edition (#1) of Mathematics
      Subject Classification; using '1991'.}%
  }{%
    \@xp\let\@xp\subjclassname\csname subjclassname@#1\endcsname
  }%
}
\newtheorem{theorem}{Theorem}[section]
\newtheorem{lemma}[theorem]{Lemma}
\newtheorem*{lemma*}{Lemma}
\newtheorem{proposition}[theorem]{Proposition}
\theoremstyle{definition}
\theoremstyle{remark}
\newtheorem{remark}[theorem]{Remark}
\numberwithin{equation}{section}
\def\XXint#1#2#3{{\setbox0=\hbox{$#1{#2#3}{\int}$}
\vcenter{\hbox{$#2#3$}}\kern-.5\wd0}}
\def\le{\leqslant}
\def\ge{\geqslant}
\begin{document}

\title[{{Solution operator of  Dirichlet problem in the unit ball}}]{{Solution operator of inhomogenuous  Dirichlet problem in the unit ball}}
\subjclass{Primary 35J05; Secondary 47G10}


\keywords{M\"obius transformations, Poisson equation, Newtonian
potential, Cauchy transform, Bessel function}

 \author{ David Kalaj and Djordjije Vujadinovi\'c }
\address{ University of Montenegro, Faculty of Mathematics, Dzordza Va\v singtona  bb, 81000 Podgorica, Montenegro}
 \email{  djordjijevuj@t-com.me}
 \date{}
\begin{abstract}
In this paper we estimate norms of integral operator induced with Green function  related to the Poisson equation in the unit ball with vanishing boundary data.
 \end{abstract}

\maketitle

\section{Introduction and Notation}

We denote by $B^{n}$ and $S^{n-1}$ the unit ball and unit sphere in $R^n$ respectively. Throughout the paper we will assume that $n>2$ (the case $n=2$ has been already treated in \cite{Kalaj, Kalaj1}). By the vector norm $|\cdot|$ we consider $|x|=(\sum_{i=1}^{n}x_{i}^{2})^{\frac{1}{2}},$ and by the norm of an operator $T: X\rightarrow Y$ which acts between two normed  spaces $X$ and $Y$
 we mean
$$\|T\|=\sup\{\|Tx\|:\|x\|=1\}.$$
Let $P$ be the Poisson kernel, i.e. function
$$P(x,\eta)=\frac{1-|x|^2}{|x-\eta|^{n}},$$
and let $G$ be the Green function of the unit ball w.r.t. Laplace operator, i.e., the function
$$G(x,y)=c_{n}\left(\frac{1}{|x-y|^{n-2}}-\frac{1}{[x,y]^{n-2}}\right),$$ where \begin{equation}\label{cen}c_{n}=\frac{1}{(n-2)\omega_{n-1}},\end{equation} where $\omega_{n-1}$ is the Hausdorff measure of $S^{n-1}$ and $$ [x,y]:=|x|y|-y/|y||=|y|x|-x/|x||.$$ As it is known, both functions $P$ and $G$ are harmonic for $|x|<1$ with $x\neq y.$

Let $f:S^{n-1}\rightarrow R^{n}$ be a $L^1$  integrable function on the unit sphere $S^{n-1},$ and let $g:B^{n}\rightarrow R^{n}$ be $L^1$ integrable function in the unit ball. The solution of the  Poisson equation $\triangle u=g$ (in the sense of distributions),  in the unit ball, satisfying the boundary condition $u|_{S^{n-1}}=f\in L^{1}(S^{n-1})$ is given by
\begin{equation}
u(x)=P[f](x)-{\mathcal G}[g](x):=\int_{S^{n-1}}P(x,\eta)f(\eta)d\sigma(\eta)-\int_{B^{n}}G(x,y)g(y)dy,
\end{equation}
 $|x|<1.$ Here $d\sigma$ is the normalized Lebesgue $n-1$ dimensional measure of the Euclid sphere.

 We consider the Poisson equation with inhomogenous Dirichlet boundary condition \begin{equation}\label{eqpo}\left\{\begin{array}{rr}
                     \triangle u(x) &= g, x\in B^n\\
                        &u|_{\partial B^n}=0
                     \end{array}\right.\end{equation} where $g\in L^{p}(B^{n}),$ $p\ge 1$. The weak solution is then given by
 \begin{equation}
 u(x)=-{\mathcal G}[g](x)=-\int_{B^{n}}G(x,y)g(y)dy,|x|<1.
 \end{equation}
 The main goal of our paper is related to estimating various norms of the integral operator ${\mathcal G}$. We call it the \emph{solution operator} of Dirichlet's problem.  The compressive study of this problem for $n=2$ has been done by the first author in \cite{Kalaj}. In \cite{Kalaj1} it is considered its counterpart for \emph{differential operator} of Dirichlet's problem.  For some related results concerning the planar case we refer to the papers \cite{ah, duke, dost, dost1, dost2}. In \cite{akl}, Anderson, Khavinson and Lomonosov considered the $L^2$ norm of the operator $$\mathcal{N}[f](x)=:\frac{1}{(n-2)\omega_{n-1}}\int_{B^n}\frac{1}{|x-y|^{n-2}} f(y)dy.$$

 The following two results extend and generalize the corresponding results obtained in \cite{Kalaj} and \cite{akl}.
  \begin{theorem}\label{m1}
Let ${\mathcal G}:L^{p}(B)\rightarrow L^{\infty}(B)$, where  $p>n/2$. Then
$$\|\mathcal G\|=c_{n}\left(\frac{\pi ^{n/2} \Gamma(1+q) \Gamma\left(\frac{n-q(-2+n)}{-2+n}\right)}{\Gamma\left(1+\frac{n}{2}\right) \Gamma\left(\frac{n}{-2+n}\right)}\right)^{\frac{1}{q}},\enspace1\le q<\frac{n}{n-2} $$
where $\enspace n\geq 3$ and  $1/p+1/q=1$. In particular for $p=\infty$
$$\|\mathcal G\|_{\infty}=\frac{1}{2n}\enspace ( n\geq 3).$$
\end{theorem}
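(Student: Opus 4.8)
The plan is to exploit that the target space is $L^\infty$, which turns the operator norm into a pointwise dual norm. First I would fix $x\in B$ and regard $g\mapsto {\mathcal G}[g](x)=\int_B G(x,y)g(y)\,dy$ as a bounded linear functional on $L^p(B)$; by $L^p$--$L^q$ duality its norm is exactly $\norm{G(x,\cdot)}_{L^q(B)}$. The hypothesis enters only through integrability: the pole singularity of $G(x,\cdot)$ is $\sim c_n\abs{x-y}^{2-n}$, which lies in $L^q$ near $y=x$ precisely when $q(n-2)<n$, i.e. $q<\tfrac{n}{n-2}$, equivalently $p>n/2$. Since $x\mapsto {\mathcal G}[g](x)$ is continuous on the open ball, the essential supremum in $\norm{{\mathcal G}[g]}_\infty$ is a genuine supremum over interior points, and a routine interchange gives
\[
\norm{{\mathcal G}}=\sup_{x\in B}\norm{G(x,\cdot)}_{L^q(B)} .
\]
Thus everything reduces to maximizing $x\mapsto\norm{G(x,\cdot)}_q$ over $B$ and evaluating it at the maximizer.

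The main obstacle is to show that this maximum is attained at the center $x=0$. I see two routes. The cleanest is symmetrization: $G(x,\cdot)$ is the Green potential solving $-\Delta v=\delta_x$ in $B$ with $v|_{\partial B}=0$, and symmetrizing the pole replaces $\delta_x$ by $\delta_0$; a Talenti-type comparison then yields $G(x,\cdot)^\ast\le G(0,\cdot)$ for the decreasing rearrangements, whence $\norm{G(x,\cdot)}_q\le\norm{G(0,\cdot)}_q$ for all $q\ge1$ at once. A self-contained alternative uses the Möbius automorphisms $\phi_x$ of $B$ with $\phi_x(0)=x$: starting from $[x,y]^2=\abs{x-y}^2+(1-\abs{x}^2)(1-\abs{y}^2)$ one checks that both $\abs{\phi_x(u)-\phi_x(v)}$ and $[\phi_x(u),\phi_x(v)]$ scale by the same conformal factor $\sqrt{\abs{\phi_x'(u)}\,\abs{\phi_x'(v)}}$, so that
\[
G(\phi_x(u),\phi_x(v))=\big(\abs{\phi_x'(u)}\,\abs{\phi_x'(v)}\big)^{-(n-2)/2}G(u,v).
\]
Changing variables $y=\phi_x(z)$ turns the norm into the radial expression
\[
\norm{G(x,\cdot)}_q^{\,q}=(1-\abs{x}^2)^{\,n-q(n-2)}\int_B\frac{G(0,z)^q}{[z,x]^{\,2n-q(n-2)}}\,dz ,
\]
and one is left to prove that this function of $\abs{x}$ is nonincreasing; this is the computational heart of the argument.

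Granting the maximum at the origin, I would finish by an explicit evaluation. At $x=0$ one has $[0,y]=1$, so $G(0,y)=c_n(\abs{y}^{2-n}-1)\ge0$ is radial, and polar coordinates give
\[
\norm{G(0,\cdot)}_q^{\,q}=c_n^{\,q}\,\omega_{n-1}\int_0^1\big(s^{2-n}-1\big)^q s^{\,n-1}\,ds .
\]
The substitution $t=s^{n-2}$ converts the radial integral into a Beta integral $\tfrac{1}{n-2}B\!\big(\tfrac{n-q(n-2)}{n-2},\,q+1\big)$; rewriting with $\omega_{n-1}=2\pi^{n/2}/\Gamma(n/2)$ and using $\Gamma(1+z)=z\Gamma(z)$ to absorb the factors $\tfrac1{n-2}$ and $\tfrac2n$ produces exactly
\[
\norm{G(0,\cdot)}_q=c_n\left(\frac{\pi^{n/2}\,\Gamma(1+q)\,\Gamma\!\big(\tfrac{n-q(n-2)}{n-2}\big)}{\Gamma(1+\tfrac n2)\,\Gamma\!\big(\tfrac{n}{n-2}\big)}\right)^{1/q}.
\]

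Finally, for $p=\infty$ (so $q=1$) the radial integral is elementary, $\int_0^1(s-s^{n-1})\,ds=\tfrac12-\tfrac1n=\tfrac{n-2}{2n}$, and since $c_n\omega_{n-1}=\tfrac{1}{n-2}$ this collapses to $\norm{{\mathcal G}}_\infty=\tfrac{1}{2n}$, in agreement with the general formula. In this case the center-maximum is transparent directly, since $x\mapsto\int_B G(x,y)\,dy$ solves $-\Delta w=1$ with zero boundary data, giving $w(x)=\tfrac{1-\abs{x}^2}{2n}$, manifestly maximal at $x=0$.
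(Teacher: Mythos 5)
Your proposal is correct, and its decisive step is genuinely different from the paper's. The duality reduction $\|\mathcal G\|=\sup_{x\in B}\|G(x,\cdot)\|_{L^q}$ is exactly the paper's Lemma~4.1, and your M\"obius computation (your second route) reproduces, via the conformal scaling $G(\phi_x(u),\phi_x(v))=\left(|\phi_x'(u)||\phi_x'(v)|\right)^{-(n-2)/2}G(u,v)$, precisely the radial expression $(1-|x|^2)^{n-q(n-2)}\int_B G(0,z)^q\,[z,x]^{q(n-2)-2n}\,dz$ that the paper arrives at; but the point where you stop (``one is left to prove that this function of $|x|$ is nonincreasing'') is exactly where the paper's real work begins: it expands the sphere integral into a hypergeometric series, applies Kummer's quadratic transformation, Euler's and Pfaff's identities and a Gamma-function inequality, and proves the monotonicity in the involved Lemma~3.1 --- even then needing a separate series argument for $n=3$, $1<q\le 2$. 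So your second route alone would be an incomplete proof. Your first route, however, carries the argument and is cleaner than the paper: since $G(x,\cdot)\ge 0$ is the Green potential with data $\delta_x$, and $B^\sharp=B$, $\delta_x^\sharp=\delta_0$, a Talenti-type comparison (applied to $f_\epsilon=|B_\epsilon(x)|^{-1}\chi_{B_\epsilon(x)}$ and passed to the limit $\epsilon\to 0$, or quoted directly in its measure-data form) yields $G(x,\cdot)^\ast\le G(0,\cdot)$, hence $\|G(x,\cdot)\|_q\le\|G(0,\cdot)\|_q$ for all $1\le q<\frac{n}{n-2}$ simultaneously, with no case splitting in $n$ or $q$ and no hypergeometric machinery; you should make that approximation (or citation) explicit, and note $|x-y|\le[x,y]$ on $B\times B$ so that $|G|^q=G^q$. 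Your closing Beta-integral evaluation, with $\frac{n-q(n-2)}{n-2}+q=\frac{n}{n-2}$ absorbing the Gamma factors, and the $q=1$ case via $w(x)=\frac{1-|x|^2}{2n}$ both agree with the paper (the latter is its Remark~1.2). The trade-off: the paper's proof is self-contained and purely computational, while yours imports rearrangement theory but is shorter, uniform in $(n,q)$, and explains conceptually why the center is extremal.
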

\begin{remark}
The particular case $p=\infty$ ($q=1$) of Theorem~\ref{m1}, is simply and follows from the following observation. Since the function $u(x)=-\frac{1}{2n}(1-|x|^2)$ represents unique solution of Poisson equation
$$\left\{\begin{array}{rr}
                     \triangle u(x) &= 1, x\in \Omega\\
                        &u|_{\partial \Omega}=0
                     \end{array}\right.,$$ it follows that for any integer $n,n\geq3$ we have
\begin{equation}
\|\mathcal G\|_{\infty}=\sup_{x\in B^n}\left|\int_{B^n}G(x,y)dy\right|=\frac{1}{2n}\sup_{x\in B^n}(1-|x|^2)=\frac{1}{2n}.
\end{equation}

\end{remark}
\begin{theorem}\label{hrt}
For $p\ge 1$, the operator $\mathcal{G}$ is a bounded operator of the space $L^p$ onto itself with the norm $\|\mathcal G\|_p$ satisfying the inequalities
$$\|{\mathcal G}\|_p\leq (2n)^{\frac{p-2}{p}}\lambda_1^{\frac{2(1-p)}{p}},\ \ 1\le p\le 2$$
and
 $$\|{\mathcal G}\|_p\leq \lambda_1^{-\frac{2}{p}}(2n)^{\frac{p-2}{p}},\quad 2\le p\le\infty$$ which reduces to an equality for $p=1,2,\infty$, where $\lambda_1=\lambda_1(B^n)$ is the the first eigenvalues of Dirichlet Laplacian of the unit ball defined in Subsection~\ref{eine}.
\end{theorem}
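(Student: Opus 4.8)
The plan is to pin down the operator norm of $\mathcal G$ exactly at the three exponents $p=1,2,\infty$ and then fill in the intermediate ranges by the Riesz--Thorin interpolation theorem. The structural fact driving everything is that $\mathcal G$ is the inverse of the Dirichlet Laplacian: since $u=-\mathcal G[g]$ is the weak solution of $\triangle u=g$ with $u|_{S^{n-1}}=0$, we get $-\triangle\,\mathcal G[g]=g$, so $\mathcal G=(-\triangle)^{-1}$ on $B^n$ with vanishing boundary data. Consequently $\mathcal G$ is a positive self-adjoint operator on $L^2(B^n)$ whose kernel $G(x,y)$ is symmetric and nonnegative, and these three properties are exactly what the endpoint computations below will exploit.

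First I would record the $p=\infty$ and $p=1$ endpoints. For $g\in L^\infty$, nonnegativity of $G$ together with the Remark gives
$$\abs{\mathcal G[g](x)}\le\norm{g}_\infty\int_{B^n}G(x,y)\,dy=\norm{g}_\infty\,\frac{1-\abs{x}^2}{2n}\le\frac{\norm{g}_\infty}{2n},$$
with equality for $g\equiv1$ evaluated at $x=0$; hence $\norm{\mathcal G}_{L^\infty\to L^\infty}=1/(2n)$. Since $\mathcal G$ has a real symmetric kernel it coincides with its own Banach-space adjoint, so duality immediately yields $\norm{\mathcal G}_{L^1\to L^1}=\norm{\mathcal G}_{L^\infty\to L^\infty}=1/(2n)$; equivalently one applies Fubini and $\int_{B^n}G(x,y)\,dx=(1-\abs{y}^2)/(2n)$.

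The $L^2$ endpoint is where the real work lies, and I expect it to be the main obstacle. Because $\mathcal G=(-\triangle)^{-1}$ is positive and self-adjoint and the embedding $\Ha^1_0(B^n)\hookrightarrow L^2(B^n)$ is compact, $\mathcal G$ is a compact operator with discrete spectrum; its eigenvalues are precisely $\{1/\lambda_k\}$, where $0<\lambda_1<\lambda_2\le\cdots$ are the Dirichlet eigenvalues of $-\triangle$ on $B^n$, and the eigenfunctions form an orthonormal basis of $L^2(B^n)$. For a positive self-adjoint operator the operator norm equals the spectral radius, so $\norm{\mathcal G}_{L^2\to L^2}=1/\lambda_1$, attained on the first eigenfunction. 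The delicate point is the rigorous identification of $\mathcal G$ with the resolvent inverse and the description of its spectrum; this rests on classical elliptic regularity and the spectral theorem for compact self-adjoint operators.

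Finally I would interpolate. On $[1,2]$, writing $1/p=(1-\theta)+\theta/2$ gives $\theta=2(p-1)/p$ and $1-\theta=(2-p)/p$, so Riesz--Thorin yields
$$\norm{\mathcal G}_p\le\norm{\mathcal G}_1^{\,1-\theta}\,\norm{\mathcal G}_2^{\,\theta}=\left(\tfrac1{2n}\right)^{(2-p)/p}\lambda_1^{-2(p-1)/p}=(2n)^{(p-2)/p}\,\lambda_1^{2(1-p)/p},$$
which is the first inequality. On $[2,\infty]$, writing $1/p=(1-\theta)/2$ gives $\theta=(p-2)/p$, and the same theorem gives
$$\norm{\mathcal G}_p\le\norm{\mathcal G}_2^{\,2/p}\,\norm{\mathcal G}_\infty^{\,(p-2)/p}=\lambda_1^{-2/p}\,(2n)^{(2-p)/p},$$
which at $p=\infty$ returns the value $1/(2n)$ in agreement with the Remark. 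Since $p=1,2,\infty$ are exactly the exponents at which the norms were computed \emph{exactly}, the interpolation inequalities reduce to equalities there, which completes the argument.
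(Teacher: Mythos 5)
Your proof is correct and follows essentially the same route as the paper: exact endpoint norms $\|\mathcal G\|_{L^1\to L^1}=\|\mathcal G\|_{L^\infty\to L^\infty}=\frac{1}{2n}$ (via the explicit solution for $g\equiv 1$ and duality through the symmetric kernel) and $\|\mathcal G\|_{L^2\to L^2}=1/\lambda_1$ via the Dirichlet eigenbasis --- the paper expands $f$ directly in the eigenfunctions and uses $\mathcal G[\varphi_k]=-\varphi_k/\lambda_k$, where you invoke compactness of $H^1_0\hookrightarrow L^2$ and the spectral theorem, an immaterial difference --- followed by Riesz--Thorin between the pairs $(1,2)$ and $(2,\infty)$.

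One point is worth recording. For $2\le p\le\infty$ your interpolation yields $\lambda_1^{-2/p}(2n)^{(2-p)/p}$, whereas the theorem as printed asserts the bound $\lambda_1^{-2/p}(2n)^{(p-2)/p}$. Your exponent is the correct one: since $\|\mathcal G\|_\infty=(2n)^{-1}$, one has $\|\mathcal G\|_2^{2/p}\,\|\mathcal G\|_\infty^{(p-2)/p}=\lambda_1^{-2/p}(2n)^{(2-p)/p}$, and only this version is consistent with the claimed equality at $p=\infty$ (the printed formula would give the value $2n$ there instead of $1/(2n)$). The paper's own displayed interpolation step $\|\mathcal G\|_2^{2/p}\|\mathcal G\|_1^{(p-2)/p}$ in fact equals your expression, so the discrepancy is a sign typo in the statement rather than a flaw in either argument; your blind computation proves the sharper (correct) inequality, which in particular implies the stated one since $2n>1$.
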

The proof of Theorem~\ref{m1} is postponed in section~4 and is  obtained via M\"obius transformations of the unit ball. It depends in Lemma~\ref{lema2}, which is somehow very involved and presents itself a subtle integral inequality. The proof of Theorem~\ref{hrt}, uses the eigenvalues of Dirichlet Laplacian and follows from Ries-Thorin interpolation theorem.

\section{Preliminaries}
   \subsection{Gauss hypergeometric function}
    Through the paper we will often use the properties of the hypergeometric functions. First of all, the hypergeometric function $F(a,b,c,t)={}_2F_{1} (a,b;c;t)$ is defined by the series expansion
  $$\sum_{n=0}^{\infty}\frac{(a)_{n}(b)_{n}}{n!(c)_n}t^{n},\enspace \mbox{for}\enspace|t|<1,$$
   and by the continuation elsewhere. Here $(a)_n$ denotes shifted factorial, i.e. $(a)_{n}=a(a+1)...(a+n-1)$ and $a$ is any real number.\\
   The following identities will be used in the proof of the main results of this paper:

    Euler's identity:
  \begin{equation}\label{en1}F(a,b;c;t)=(1-t^2)^{c-a-b}F(c-a,c-b;c;t),\enspace \mbox{Re}{(c)}>\mbox{Re}{(b)}>0,\end{equation}
  Pfaff's identity:
   \begin{equation}\label{en2}F(a,b;c;t)=(1-t^2)^{-a}F(a,c-b;c;\frac{t}{t-1}),\enspace \mbox{Re}{(c)}>\mbox{Re}{(b)}>0,\end{equation}
   Differentiation identity:
 \begin{equation}\label{en3}\frac{\partial}{\partial t}F(a,b;c;t)=\frac{ab}{c}F(a+1,b+1;c+1;t),\end{equation}
 and Kummer's Quadratic Transformation
\begin{equation}\label{en4}F\left(a,b;2b;\frac{4t}{(1+t)^2}\right)=(1+t)^{2a}F(a,a+\frac{1}{2}-b;b+\frac{1}{2};t^2),\end{equation}
 where above identity is true for every $t$  for which both series converge.

By using the Chebychev's inequality one can easily obtain the following inequality for Gamma function (see \cite{drag}).
\begin{proposition} Let $m$, $p$ and $k$ be real numbers with $m,p > 0$ and $p > k >- m$: If
\begin{equation}\label{(3.9)} k (p -m-k)\ge 0\ (\le 0)\end{equation}
then we have
\begin{equation}\label{(3.10)} \Gamma(p)\Gamma(m) \ge (\le ) \Gamma(p - k)\Gamma (m + k). \end{equation}
 \end{proposition}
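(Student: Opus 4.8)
The plan is to eliminate the additive coupling between the two Gamma-arguments by noting that their sum is constant in $k$. Since $(p-k)+(m+k)=p+m=:c$ is independent of $k$, dividing the desired inequality by the positive number $\Gamma(c)$ shows it is equivalent to the corresponding statement for the Beta function, namely
$$B(p,m)\ge(\le)\,B(p-k,m+k),\qquad B(a,b)=\frac{\Gamma(a)\Gamma(b)}{\Gamma(a+b)}.$$
First I would record that every argument that appears is admissible: the hypotheses $p>k$ and $k>-m$ give $0<p-k<c$ and $0<m+k<c$, so each Beta value is finite and positive. I then introduce the single-variable function $\beta(a):=\Gamma(a)\Gamma(c-a)$ on the interval $(0,c)$, so that the two sides of the claim are precisely $\beta(p)$ and $\beta(p-k)$.

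The key structural input is convexity together with symmetry of $\log\beta$. I would invoke the log-convexity of the Gamma function, which is exactly what Chebyshev's (equivalently the Cauchy--Schwarz/H\"older) inequality yields when applied to the integral representation $\Gamma(s)=\int_0^\infty t^{s-1}e^{-t}\,dt$; this makes both $a\mapsto\log\Gamma(a)$ and $a\mapsto\log\Gamma(c-a)$ convex, hence $\log\beta$ convex on $(0,c)$. Moreover $\beta(a)=\beta(c-a)$, so $\log\beta$ is symmetric about $a=c/2$. A convex function that is symmetric about a point attains its minimum there and is a nondecreasing function of the distance to that point; consequently
$$\beta(p)\ge\beta(p-k)\iff\Big|p-\tfrac{c}{2}\Big|\ge\Big|(p-k)-\tfrac{c}{2}\Big|.$$

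It then remains only to translate this distance comparison into the stated sign condition. Writing $A:=p-m$, one has $p-c/2=A/2$ and $(p-k)-c/2=(A-2k)/2$, so the right-hand inequality becomes $A^2\ge(A-2k)^2$, i.e. $4k(p-m-k)\ge0$ after using the routine identity $A^2-(A-2k)^2=4k(p-m-k)$. Hence $\beta(p)\ge\beta(p-k)$ holds exactly when $k(p-m-k)\ge0$, and the reversed inequality holds exactly when $k(p-m-k)\le0$, which is the assertion in both directions simultaneously.

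I expect the only genuinely delicate point to be the passage from convexity-plus-symmetry to the distance-monotonicity of $\beta$, together with the verification that $p$ and $p-k$ both lie in $(0,c)$ so that this monotonicity is applicable; the convexity itself is standard and the concluding computation is elementary. The authors instead phrase this same mechanism directly through Chebyshev's integral inequality, which encodes precisely the log-convexity exploited here.
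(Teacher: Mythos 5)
Your proof is correct, but it follows a genuinely different route from the paper's. The paper offers no argument of its own: it simply invokes Chebyshev's integral inequality, citing Dragomir--Agarwal--Barnett, where the proof is essentially one step. Namely, the weighted Chebyshev inequality $\int_0^\infty h\,dt\cdot\int_0^\infty hfg\,dt\ \ge\ \int_0^\infty hf\,dt\cdot\int_0^\infty hg\,dt$ (valid when $f,g$ are similarly ordered, reversed when oppositely ordered) is applied with $h(t)=t^{m-1}e^{-t}$, $f(t)=t^{p-m-k}$, $g(t)=t^{k}$; the four integrals are $\Gamma(m)$, $\Gamma(p)$, $\Gamma(p-k)$, $\Gamma(m+k)$, and the hypothesis $k(p-m-k)\ge 0$ $(\le 0)$ is precisely the statement that $f$ and $g$ are monotone in the same (resp.\ opposite) direction, so the inequality falls out with the correct sign in one stroke. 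Your argument instead fixes the sum $c=p+m$, studies $\beta(a)=\Gamma(a)\Gamma(c-a)$ on $(0,c)$, and combines log-convexity of $\Gamma$ with the symmetry $\beta(a)=\beta(c-a)$ to conclude that $\beta$ is monotone in $|a-c/2|$; your identity $A^2-(A-2k)^2=4k(p-m-k)$ with $A=p-m$ then correctly converts the distance comparison into the stated sign condition, and your admissibility check that $p$ and $p-k$ both lie in $(0,c)$ is exactly where the hypotheses $m,p>0$, $p>k>-m$ enter. Since $\log\Gamma$ is in fact strictly convex, even your biconditional formulation is sound. Each approach buys something: the Chebyshev route is shorter and makes the sign hypothesis appear naturally as synchronicity of two power functions, while your convexity-plus-symmetry argument explains geometrically why the condition is a comparison of distances to the center $c/2$ and extends verbatim to any product $\varphi(a)\varphi(c-a)$ with $\log\varphi$ convex. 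One small correction to an aside: Chebyshev's inequality is not ``exactly'' the source of log-convexity of $\Gamma$ --- that comes from H\"older (or Cauchy--Schwarz) applied to the integral representation; Chebyshev's covariance inequality is a related but distinct correlation inequality, which is why the two proofs, though cousins, are not the same argument.
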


 \subsection{M\"obius transformations of the unit ball}
 The set of isometries of the hyperbolic unit ball $B^n$ is a  Kleinian
subgroup of all M\"obius transformations of the extended space
$\overline{\mathbf{R}}^n$ onto itself denoted by
$\mathbf{Conf}(\mathbf{B}^n)=\mathbf{Isom}(\mathbf{B}^n)$. We refer
to the Ahlfors' book \cite{Ahl} for detailed survey to this class of
important mappings.  In general a
M\"obius transform $T_x:B^n\to B^n$ has the form
 \begin{equation}\label{mebius}z=T_{x}y=\frac{(1-|x|^2)(y-x)-|y-x|^{2}x}{[x,y]^2}.\end{equation} Then we have \begin{equation}\label{mebnorm} |T_{x}y|=\left|\frac{x-y}{[x,y]}\right|.\end{equation}
If $dy$ denotes the volume measure in the ball, because $y=T_{-x} z$ is a conformal mapping, in view of \eqref{mebnorm} we have
 \begin{equation}\label{deri}dy=\left(\frac{1-|x|^2}{[z,-x]^2}\right)^{n}dz.\end{equation}
 \subsection{Eigenvalues of Dirichlet Laplacian}\label{eine}

  First of all, it is known that there exist an orthonormal basis of $L^{2}(B^{n})$ consisting of eigenfunctions  $(\varphi_{n})_{n}$ of Dirichlet Laplacian
\begin{equation}\label{popo}\left\{
           \begin{array}{ll}
             -\Delta u = \lambda  u, & z\in B^{n} \\
             u|_{\partial B^{n}} = 0&
           \end{array}
         \right.\end{equation}
  with corresponding eigenvalues $\lambda_{1}<\lambda_{2}\leq...\leq\lambda_{n}...$ The functions $\varphi_{n}$ are real valued.

  It is well known that $\lambda_1(B^n)$ is given by the square of the first positive zero of the Bessel function
$J_{(n-1)/2}(t)$ of the first kind of order $\alpha=(n-1)/2$: \begin{equation}\label{bessel}J_\alpha(t)=\sum_{m=0}^\infty\frac{(-1)^m}{m!\Gamma(m+\alpha+1)}\left(\frac{t}{2}\right)^{2m+\alpha}.\end{equation}

 \section{The main lemma}
\begin{lemma}\label{lema2}

Let $$I(t)=(1-t^2)^{n-q(n-2)}\int_{0}^{1}\frac{(1-r^{n-2})^{q}r^{n-q(n-2)-1}}{(1-r^{2}t^{2})^{n-q(n-2)+1}}dr,\enspace 0\leq t< 1,$$
  where $n\geq 3$ is a natural number and $1<q<\frac{n}{n-2}.$ Then the maximal value of function $I(t)$ is attained for $t=0,$ i.e.,
\begin{equation}
\begin{split}
\max_{0\leq t< 1} I(t)&=I(0)=\int_{0}^{1}(1-r^{n-2})^{q}r^{n-q(n-2)-1}dr\\&=\frac{\Gamma(1+q)\Gamma\left(\frac{n-q(n-2)}{n-2}\right)}{(n-2)\Gamma\left(1+q+\frac{n-q(n-2)}{n-2}\right)}
\end{split}
\end{equation}

\end{lemma}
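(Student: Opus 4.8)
The plan is to set $a:=n-q(n-2)$, so that the hypothesis $1<q<\frac{n}{n-2}$ gives $0<a<2$, and to phrase the claimed maximality as the inequality $I(t)\le I(0)$ for every $t\in[0,1)$. I first record $I(0)$: the substitution $s=r^{n-2}$ turns the defining integral into a Beta integral,
\begin{equation*}
I(0)=\int_0^1(1-r^{n-2})^q r^{a-1}\,dr=\frac{1}{n-2}\int_0^1(1-s)^q s^{\beta-1}\,ds=\frac{1}{n-2}B(\beta,q+1),
\end{equation*}
where $\beta:=\frac{a}{n-2}=\frac{n-q(n-2)}{n-2}$ and $B(\beta,q+1)=\frac{\Gamma(\beta)\Gamma(q+1)}{\Gamma(\beta+q+1)}$, which is exactly the asserted closed form. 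Applying the same substitution after the binomial expansion $(1-r^2t^2)^{-(a+1)}=\sum_{k\ge0}\frac{(a+1)_k}{k!}(r^2t^2)^k$ (uniformly convergent on $[0,1]$ for fixed $t<1$, since $r^2t^2\le t^2<1$, so term-by-term integration is legitimate) yields
\begin{equation*}
I(t)=\frac{(1-t^2)^a}{n-2}\sum_{k\ge0}\frac{(a+1)_k}{k!}\,B(\beta+\gamma k,q+1)\,t^{2k},\qquad \gamma:=\frac{2}{n-2}.
\end{equation*}

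Dividing by $I(0)$ and using $(a+1)_k/(a)_k=(a+k)/a$, together with the expansion $(1-t^2)^{-a}=\sum_{k\ge0}\frac{(a)_k}{k!}t^{2k}$, the target $I(t)\le I(0)$ becomes
\begin{equation*}
\sum_{k\ge0}\frac{(a+1)_k}{k!}\frac{B(\beta+\gamma k,q+1)}{B(\beta,q+1)}\,t^{2k}\le\sum_{k\ge0}\frac{(a)_k}{k!}t^{2k}.
\end{equation*}
All coefficients are nonnegative and the $k=0$ terms agree, so it suffices to prove the coefficientwise bound: for every $k\ge1$,
\begin{equation*}
\frac{B(\beta+\gamma k,q+1)}{B(\beta,q+1)}\le\frac{a}{a+k}=\frac{2\beta}{2\beta+\gamma k},
\end{equation*}
the last equality following from $a=(n-2)\beta$ and $k=\tfrac{n-2}{2}\gamma k$. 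This reduces the whole lemma to the single inequality $\frac{B(\beta+b,q+1)}{B(\beta,q+1)}\le\frac{2\beta}{2\beta+b}$ with $b=\gamma k>0$, which I expect to be the heart of the matter.

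To prove it I would study $g(b):=\log B(\beta+b,q+1)-\log B(\beta,q+1)+\log\frac{2\beta+b}{2\beta}$ for $b\ge0$. Since $g(0)=0$, it is enough to show $g'(b)\le0$. Writing $B(\beta+b,q+1)=\frac{\Gamma(\beta+b)\Gamma(q+1)}{\Gamma(\beta+b+q+1)}$ and using the standard integral form of the logarithmic derivative of $\Gamma$,
\begin{equation*}
\frac{d}{db}\log\frac{\Gamma(\beta+b)}{\Gamma(\beta+b+q+1)}=-\int_0^1\frac{x^{\beta+b-1}(1-x^{q+1})}{1-x}\,dx\le-\int_0^1 x^{\beta+b-1}\,dx=-\frac{1}{\beta+b},
\end{equation*}
where the inequality uses $\frac{1-x^{q+1}}{1-x}\ge1$ on $[0,1]$ (valid as $q+1\ge1$). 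Hence $g'(b)\le-\frac{1}{\beta+b}+\frac{1}{2\beta+b}\le0$, giving $g(b)\le0$ and the desired coefficient bound. Summing the term-by-term inequalities and multiplying by $(1-t^2)^a>0$ then yields $I(t)\le I(0)$, with equality only at $t=0$.

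The genuine obstacle is precisely this coefficient inequality; it admits a transparent reading as the moment comparison $\mathbb{E}[X^b]\le\mathbb{E}[Z^b]$ for all $b>0$, where $X\sim\mathrm{Beta}(\beta,q+1)$ and $Z\sim\mathrm{Beta}(2\beta,1)$, which holds because $X$ is stochastically dominated by $Z$ (the Beta family is monotone in each parameter: raising the first parameter from $\beta$ to $2\beta$ shifts mass to the right, while the second parameter $q+1\ge1$ shifts it to the left). Alternatively, the same bound can be extracted from the Gamma-function inequality of the Proposition above applied to $\Gamma(\beta+b)\Gamma(\beta+q+1)/[\Gamma(\beta)\Gamma(\beta+b+q+1)]$. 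Either route isolates the delicate point, namely controlling the growth of the Beta integral as its first parameter increases by the conformal increment $\gamma k$.
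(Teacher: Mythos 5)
Your proof is correct, and it diverges from the paper's argument at exactly the point where the paper's proof becomes involved. Both proofs open identically: expand $(1-r^2t^2)^{-(a+1)}$ binomially and integrate term by term, so that the $k$-th Taylor coefficient of $(1-t^2)^{-a}I(t)$ is the Beta value $\frac{1}{n-2}B(\beta+\gamma k,q+1)$. The paper then majorizes the resulting Gamma ratios via its Chebychev-type Proposition (after pulling out two factors with $\Gamma(x+1)=x\Gamma(x)$), resums the majorant into a difference of two hypergeometric functions using Kummer's quadratic transformation \eqref{en4}, and concludes by showing the majorant has negative derivative; that derivative carries the factor $n-a-2$, which is negative when $n=3$ and $1<q<2$, and this is why the paper runs a wholly separate argument for $n=3$ (Euler's identity \eqref{en1}, Pfaff's identity \eqref{en2}, and monotonicity of the coefficients of $F(q,3-q,4;\cdot)$). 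You instead compare coefficientwise against $(1-t^2)^{-a}=\sum_{k\ge0}\frac{(a)_k}{k!}t^{2k}$, which compresses the whole lemma into the single inequality
\begin{equation*}
\frac{B(\beta+b,q+1)}{B(\beta,q+1)}\le\frac{2\beta}{2\beta+b},\qquad b>0,
\end{equation*}
and your digamma proof of this is sound: the bound $\frac{1-x^{q+1}}{1-x}\ge1$ uses only $q\ge0$, and the final comparison $-\frac{1}{\beta+b}+\frac{1}{2\beta+b}<0$ uses only $\beta>0$, so the argument is uniform in $n\ge3$ and even yields strict inequality for $t\in(0,1)$ as a byproduct. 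The trade-off: the paper's route stays inside the hypergeometric machinery it reuses in the proof of Theorem~\ref{m1}, while yours eliminates the hypergeometric identities and the $n=3$ case split at the cost of one clean, self-contained special-function inequality. One caution on your closing remark: the stochastic-dominance reading (comparing moments of $\mathrm{Beta}(\beta,q+1)$ and $\mathrm{Beta}(2\beta,1)$) is a genuine second proof of the key bound, but the suggested alternative via the paper's Gamma Proposition is not automatic --- the natural two-step application needs the condition $b(q-\beta)\ge0$, i.e.\ $q\ge\beta$, which fails precisely for $n=3$ and $1<q<3/2$ --- so the digamma computation should remain your primary argument.
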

\begin{proof}
At the beginning we will observe the case $n>3.$
For $a=n-q(n-2)$ we have $0<a<2$ and the next expansion
\begin{equation}
\begin{split}
I(t)&=(1-t^2)^{a}\int_{0}^{1}\frac{(1-r^{n-2})^{\frac{n-a}{n-2}}r^{a-1}}{(1-r^2t^2)^{a+1}}dr\\
&=(1-t^2)^{a}\sum_{k=0}^{\infty}\frac{\Gamma(k+a+1)}{\Gamma(a+1)k!}t^{2k}\int_{0}^{1}(1-r^{n-2})^{\frac{n-a}{n-2}}r^{2k+a-1}dr\\
&=\frac{\Gamma(2+\frac{2-a}{n-2})(1-t^2)^{a}}{(n-2)\Gamma(a+1)}\sum_{k=0}^{\infty}\frac{\Gamma(k+a+1)\Gamma(\frac{a+2k}{n-2})}{\Gamma\left(\frac{2(k+n-1)}{n-2}\right)k!}t^{2k}.
\end{split}
\end{equation}
Assume that $n\ge 3 $ and $k\ge 0$. Let
 $$K=\frac{2k}{n-2},\ \ \  M=2+\frac{2k+a}{n-2},\ \ \ P=2+\frac{2}{n-2}.$$ From  \eqref{(3.10)} we have
\begin{equation}\label{mk}\Gamma(M)\Gamma(P)\le \Gamma(M-K)\Gamma(P+K).\end{equation}  By using the formula $\Gamma(x+1)=x\Gamma (x)$ and \eqref{mk}, we have
\[\begin{split}\label{key}
\frac{\Gamma\left(\frac{a+2 k}{n-2}\right)}{\Gamma\left(\frac{2 (k+n-1)}{n-2}\right)}&=\frac{\Gamma(2+\frac{a+2k}{n-2})}{\Gamma(2+\frac{2+2k}{n-2})}\frac{1}{\left(\frac{a+2 k}{n-2}\right)\left(\frac{a+2 k}{n-2}+1\right)}\\
&\leq \frac{\Gamma(2+\frac{a}{n-2})}{\Gamma(2+\frac{2}{n-2})}\frac{1}{\left(\frac{a+2 k}{n-2}\right)\left(\frac{a+2 k}{n-2}+1\right)}.
\end{split}\]
  We obtain for $a\in(0,2)$
\[\begin{split}\frac{I(t)}{\frac{\Gamma\left(2+\frac{2-a}{n-2}\right)}{n-2}}:\frac{\Gamma(2+\frac{a}{n-2})}{\Gamma(2+\frac{2}{n-2})}&\le \frac{\left(1-t^2\right)^a}{\Gamma(a+1)}\sum_{k=0}^\infty
\frac{\Gamma(a+k+1)} {\Gamma(1+k)}\frac{t^{2k}}{\left(\frac{a+2 k}{n-2}\right)\left(\frac{a+2 k}{n-2}+1\right)}\\
&= \frac{(n-2) (1-t^2)^a}{a  } F\left(\frac{a}{2},1+a,\frac{2+a}{2},t^2\right)\\&-\frac{(n-2) (1-t^2)^a}{ (n+a-2) }F\left(1+a,\frac{1}{2} (n+a-2),\frac{a+n}{2},t^2\right)\\
&=  \frac{(n-2)}{{a  }}F\left(1,-\frac{a}{2},1+\frac{a}{2},t^2\right)\\& -\frac{(n-2)a}{{a (n+a-2) }}F\left(1,\frac{1}{2} (n-a-2),\frac{a+n}{2},t^2\right)\\&:=J(t).\end{split}\]
The last expression for the function $J(t)$ was obtained by using the identity \eqref{en4}.
Further we have
\begin{equation}
\begin{split}\label{io}
\frac{\partial J(t)}{\partial t}&=\frac{-2t(n-2)}{a+2} F\left(2,\frac{2-a}{2},2+\frac{a}{2},t^2\right)\\&-\frac{2t(n-2)(n-a-2)}{ (n+a-2)(a+n) }F\left(2,1+\frac{1}{2} (n-a-2),1+\frac{a+n}{2},t^2\right)\\
&<0.
\end{split}
\end{equation}
We conclude that the maximal value of the function $I(t)$  for $t=0$ is attained.

 In order to prove the special case $n=3$, $1<q<3,$ of Lemma~\ref{lema2}, we should notice that
\begin{equation}
\begin{split}
\max_{0\leq t< 1}I(t)&=\max_{0\leq t< 1}(1-t^2)^{3-q}\int_{0}^{1}\frac{(1-r)^{q}r^{2-q}}{(1-r^2t^2)^{4-q}}dr\\
&\leq \max_{0\leq t< 1}(1-t^2)^{3-q}\int_{0}^{1}\frac{(1-r)^{q}r^{2-q}}{(1-rt^2)^{4-q}}dr.
\end{split}
\end{equation}
Put $$J(t):=(1-t^2)^{3-q}\int_{0}^{1}\frac{(1-r)^{q}r^{2-q}}{(1-rt^2)^{4-q}}dr, 0\leq t< 1.$$
By using the Taylor expansion we obtain
\begin{equation}
J(t)=\frac{\Gamma(1+q)\Gamma(3-q)}{6}(1-t^{2})^{3-q}F\left(4-q,3-q,4;t^2\right), 0\leq t< 1.
\end{equation}
By using \eqref{en1} and \eqref{en2} respectively on expression for  $J(t)$ we have
\begin{equation}
J(t)=\frac{\Gamma(1+q)\Gamma(3-q)}{6}F\left(q,3-q,4;\frac{t^2}{t^{2}-1}\right), 0\leq t< 1.
\end{equation}
So,
\begin{equation}
\begin{split}
\max_{0\leq t< 1}J(t)&=\frac{\Gamma(1+q)\Gamma(3-q)}{6}\max_{0\leq t< 1}F\left(q,3-q,4;\frac{t^2}{t^{2}-1}\right)\\
&=\frac{\Gamma(1+q)\Gamma(3-q)}{6}\max_{0\leq t< 1}F\left(q,3-q,4;0\right).
\end{split}
\end{equation}
The last equality is a consequence of the fact that $\frac{t^2}{t^2-1}<0$ and that coefficients  $$\frac{(q)_{k}(3-q)_{k}}{(1)_k(4)_{k}}$$ of the hypergeometric function $$F\left(q,3-q,4;\frac{t^2}{t^{2}-1}\right)$$ are  decreasing with respect to $k\geq 1.$\\
So,
\begin{equation}
\max_{0\leq t< 1}I(t)=I(0)=\int_{0}^{1}(1-r)^{q}r^{2-q}dr=\frac{\pi q(1-q)(2-q)}{6\sin{\pi q}}.
\end{equation}
\end{proof}

\section{Proof of Theorem~\ref{m1}}
We start this section with an easy lemma.

\begin{lemma}\label{lopi}
 Let  $\|{\mathcal G}\|:=\|{\mathcal G}:L^{p}(B^{n})\rightarrow L^{\infty}(B^{n})\|$ for $ p>\frac{n}{2}.$ Then
$$\|\mathcal G\|=\sup_{x\in B^n}\left(\int_{B^{n}}\left|G(x,y)\right|^{q}dy\right)^{\frac{1}{q}},\enspace \frac{1}{p}+\frac{1}{q}=1.$$
\end{lemma}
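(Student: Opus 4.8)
The plan is to recognize Lemma~\ref{lopi} as the standard duality computation of the $L^p\to L^\infty$ operator norm of the integral operator with kernel $G$: for fixed $x$ the map $g\mapsto\mathcal G[g](x)=\int_{B^n}G(x,y)g(y)\,dy$ is a linear functional on $L^p(B^n)$ of norm $\big(\int_{B^n}\abs{G(x,y)}^q\,dy\big)^{1/q}$, and $\norm{\mathcal G[g]}_\infty$ is the supremum over $x$ of these functionals. First I would record that the hypothesis $p>n/2$ is exactly what makes each functional finite. Indeed $G(x,y)$ has only the diagonal singularity $G(x,y)\sim c_n\abs{x-y}^{2-n}$ as $y\to x$ and vanishes on $\partial B^n$, so $\abs{G(x,\cdot)}^q\sim\abs{x-y}^{(2-n)q}$ near $y=x$, which is integrable precisely when $(n-2)q<n$, i.e. $q<n/(n-2)$; via $1/p+1/q=1$ this is the same as $p>n/2$, so $G(x,\cdot)\in L^q(B^n)$ for every $x$.

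For the upper bound I would apply H\"older's inequality in $y$ for each fixed $x$,
\[
\abs{\mathcal G[g](x)}\le\int_{B^n}\abs{G(x,y)}\,\abs{g(y)}\,dy\le\left(\int_{B^n}\abs{G(x,y)}^q\,dy\right)^{1/q}\norm{g}_p,
\]
and then take the essential supremum over $x$ to obtain $\norm{\mathcal G[g]}_\infty\le\sup_{x\in B^n}\big(\int_{B^n}\abs{G(x,y)}^q\,dy\big)^{1/q}\norm{g}_p$, hence $\norm{\mathcal G}\le\sup_{x\in B^n}\big(\int_{B^n}\abs{G(x,y)}^q\,dy\big)^{1/q}$.

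For the reverse inequality I would fix $x_0\in B^n$ and test $\mathcal G$ against the function that saturates H\"older. Because $[x,y]\ge\abs{x-y}$ on $B^n$ the Green function obeys $G(x_0,y)\ge0$, so I set $g_0(y):=G(x_0,y)^{q-1}$. Since $(q-1)p=q$ one has $\norm{g_0}_p^p=\int_{B^n}G(x_0,y)^q\,dy<\infty$, so $g_0\in L^p(B^n)$, and
\[
\mathcal G[g_0](x_0)=\int_{B^n}G(x_0,y)^{q}\,dy=\norm{g_0}_p^p.
\]
Dividing by $\norm{g_0}_p$ and using $(p-1)/p=1/q$ gives $\mathcal G[g_0](x_0)/\norm{g_0}_p=\big(\int_{B^n}G(x_0,y)^q\,dy\big)^{1/q}$, so that $\norm{\mathcal G}\ge\big(\int_{B^n}G(x_0,y)^q\,dy\big)^{1/q}$; taking the supremum over $x_0$ closes the gap with the upper bound.

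The step I expect to be the genuine obstacle is legitimizing the passage from the pointwise value $\mathcal G[g_0](x_0)$ to the essential supremum defining $\norm{\mathcal G[g_0]}_\infty$, since a single point is a priori negligible for an $L^\infty$ norm. I would handle this by verifying that $x\mapsto\mathcal G[g_0](x)$ is continuous on $B^n$: the kernel is continuous off the diagonal, and the integrable diagonal singularity can be dominated uniformly for $x$ in a small neighbourhood of $x_0$, so dominated convergence yields continuity and hence $\norm{\mathcal G[g_0]}_\infty\ge\abs{\mathcal G[g_0](x_0)}$. The only other thing to monitor throughout is finiteness of the integrals, which as noted is encoded exactly by $p>n/2$.
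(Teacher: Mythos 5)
Your proposal is correct and follows essentially the same route as the paper: the upper bound by H\"older's inequality and the lower bound by testing against the extremal function $G(x_0,\cdot)^{q-1}$ (the paper normalizes it and uses an $\varepsilon$-argument over $x_0$ instead of your direct supremum, a cosmetic difference). Your extra verification that $x\mapsto\mathcal G[g_0](x)$ is continuous, so that the pointwise value at $x_0$ really bounds the essential supremum, is a legitimate refinement of a step the paper passes over silently.
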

\begin{proof}
Let $u(x)={\mathcal G}[g](x),\enspace g\in L^{p}(B).$ H\"{o}lder inequality implies
$$\|u\|_{\infty}\leq\sup_{x\in B}\left(\int_{B}\left|G(x,y)\right|^{q}dy\right)^{\frac{1}{q}}\left(\int_{B}|g(y)|^{p}dy\right)^{\frac{1}{p}},$$
i.e.,
$$\|\mathcal G\|\leq\sup_{x\in B^n}\left(\int_{B^n}\left|G(x,y)\right|^{q}dy\right)^{\frac{1}{q}}.$$
On the other hand, there exist $x_{0}\in B^n$ so that $$\left(\int_{B^n}\left|G(x_{0},y)\right|^{q}dy\right)^{\frac{1}{q}}>\sup_{x\in B^n}\left(\int_{B^n}\left|G(x,y)\right|^{q}dy\right)^{\frac{1}{q}}-\epsilon.$$
We fix  $x_{0}\in B^n.$ Let us consider the function $$g(y)=\frac {(G(x_{0},y))^{q-1}}{\|(G(x_{0},y))^{q-1}\|_{p}}.$$ Then
\begin{equation}
\begin{split}
\|\mathcal G\|&\geq |{\mathcal G}[g](x_{0})|\\&=\left(\int_{B^n}\left|G(x_{0},y)\right|^{q}dy\right)^{-\frac{1}{p}}\int_{B^n}|G(x_{0},y)|^{q}dy\\
&=\left(\int_{B^n}\left|G(x_{0},y)\right|^{q}dy\right)^{\frac{1}{q}}\\&>\sup_{x\in B^n}\left(\int_{B^n}\left|G(x,y)\right|^{q}dy\right)^{\frac{1}{q}}-\epsilon,
\end{split}
\end{equation}
 i.e., $$\|\mathcal G\|=\sup_{x\in B^n}\left(\int_{B^n}\left|G(x,y)\right|^{q}dy\right)^{\frac{1}{q}}.$$
\end{proof}
\begin{proof}[Proof of Theorem~\ref{m1}]
We divide the proof into two cases.

(i) This case includes the following range for $(n,q)$: $n> 3,$ with  $1<q<\frac{n}{n-2}$ and $n=3$ with $q\in (2,3).$
According to Lemma~\ref{lopi}, $$\|\mathcal G\|=\sup_{x\in B^n}\left(\int_{B^n}\left|G(x,y)\right|^{q}dy\right)^{\frac{1}{q}},q>1.$$
Further  we have
\begin{equation}
\|\mathcal G\|^q=c_{n}^{q}\sup_{x\in B^{n}}\int_{B^{n}}\frac{1}{|x-y|^{q(n-2)}}\left|1-\left|\frac{x-y}{[x,y]}\right|^{n-2}\right|^{q}dy,
\end{equation} where $c_{n}$ is defined in \eqref{cen}.
We use the change of variable $z=T_{x}y$ i.e. $T_{-x}z=y,$ in the previous integral where $T_{x}y$ M\"{o}bius transform defined in \eqref{mebius}.
By \eqref{deri}, denoting $t=|x|$, we obtain,
\[
\begin{split}
&\sup_{x\in B^n}\int_{B^n}\left|G(x,y)\right|^{q}dy\\&=\sup_{x\in B^n}c_{n}^{q}\int_{B^n}\frac{1}{|x-T_{-x}z|^{q(n-2)}}|1-|z|^{n-2}|^{q}\frac{(1-t^2)^n}{[z,-x]^{2n}}dz\\
&=c_{n}^{q}\sup_{x\in B^n}(1-t^2)^{n}\int_{B^n}\frac{(1-|z|^{n-2})^q}{\left|\frac{x[z,-x]^{2}-(1-t^2)(x+z)-|x+z|^{2}x}{[z,-x]^2}\right|^{q(n-2)}}\frac{dz}{[z,-x]^{2n}}\\
&=c_{n}^{q}\sup_{x\in B^n}(1-t^2)^{n}\int_{B^n}\frac{(1-|z|^{n-2})^q}{|z|^{q(n-2)}\left|\frac{1-t^2}{[z,-x]}\right|^{q(n-2)}}\frac{dz}{[z,-x]^{2n}}
\end{split}
\]
\[
\begin{split}
&=c_{n}^{q}\sup_{x\in B^n}(1-t^2)^{n-q(n-2)}\int_{B^n}\left(\frac{1-|z|^{n-2}}{|z|^{n-2}}\right)^{q}[z,-x]^{q(n-2)-2n}dz\\
&=c_{n}^{q}\sup_{x\in B^n}(1-t^2)^{n-q(n-2)}\int_{0}^{1}\frac{(1-r^{n-2})^{q}}{r^{q(n-2)+1-n}}dr\int_{S}\frac{d\xi}{|rx+\xi|^{2n-q(n-2)}}\\
&=c_{n}^{q}\sup_{x\in B^n }(1-t^2)^{a}\int_{0}^{1}\frac{(1-r^{n-2})^{q}}{r^{1-a}}dr \int_{S}\frac{d\xi}{(r^{2}t^{2}+2rt\xi_{1}+1)^{\frac{n+a}{2}}}\\
&=c_{n}^{q}C_{n}\sup_{x\in B^n}(1-t^2)^{a}\int_{0}^{1}\frac{(1-r^{n-2})^{q}}{r^{1-a}}dr
\int_{-1}^{1}\frac{(1-s^2)^{\frac{n-3}{2}}}{(r^{2}t^{2}+2rts+1)^{\frac{n+a}{2}}}ds,
\end{split}
\]
where $$a=n-q(n-2),\quad C_{n}=\frac{\omega_{n-1}\Gamma(n-1)}{2^{n-2}\Gamma^{2}(\frac{n-2}{2})}$$ and in last two equalities it was assumed without loss of generality that $x=te_1,\xi=(\xi_1,...,\xi_n).$
If we take change of variable $$\tau =\frac{1-s}{2}$$ in the previous integral we have
\begin{equation}
\begin{split}
\|\mathcal G\|^q:c_{n}^{q}&=C_{n}\sup_{x\in B^n}(1-t^2)^{a}\int_{0}^{1}\frac{(1-r^{n-2})^{q}}{r^{1-a}}dr\int_{-1}^{1}\frac{(1-s^2)^{\frac{n-3}{2}}}{(r^{2}t^{2}+2rts+1)^{\frac{n+a}{2}}}ds\\
&=2^{n-2}C_n\sup_{x\in B^n}(1-t^2)^{a}\int_{0}^{1}\frac{(1-r^{n-2})^{q}r^{a-1}}{(1+rt)^{n+a}}dr
\int_{0}^{1}\frac{\tau^{\frac{n-3}{2}}(1-\tau)^{\frac{n-3}{2}}}{(1-\frac{4rt\tau}{(1+rt)^2})^{\frac{n+a}{2}}}d\tau.
\end{split}
\end{equation}
On the other hand, for fixed $r$ we have $\frac{4rt}{(1+rt)^2}<1$ and
\begin{equation}
\begin{split}
&\int_{0}^{1}\frac{\tau^{\frac{n-3}{2}}(1-\tau)^{\frac{n-3}{2}}}{(1-\frac{4rt\tau}{(1+rt)^2})^{\frac{a+n}{2}}}d\tau\\&=\sum_{k=0}^{\infty}\frac{\Gamma(\lambda+k)}{k!\Gamma(\lambda)}\left(\frac{4rt}{(1+rt)^2}\right)^{k}\int_{0}^{1}\tau^{k+\frac{n-3}{2}}(1-\tau)^{\frac{n-3}{2}}d\tau\\
&=\Gamma\left(\frac{n-1}{2}\right)\sum_{k=0}^{\infty}\frac{\Gamma(\lambda+k)\Gamma(k+\frac{n-3}{2}+1)}{k!\Gamma(\lambda)\Gamma(n-1+k)}\left(\frac{4rt}{(1+rt)^2}\right)^{k}\\
&=\frac{\Gamma^2(\frac{n-1}{2})}{\Gamma(n-1)}F\left(\lambda, \frac{n-1}{2};n-1; \frac{4rt}{(1+rt)^2}\right),
\end{split}
\end{equation}
where $\lambda=\frac{n+a}{2}.$\\
 By using  Kummer quadratic transformation and Euler's transformation for hypergeometric functions, for $t=|x|$, we obtain
\begin{equation}
\begin{split}\label{am}
&\sup_{x\in B^n}(1-t^2)^{a}\int_{0}^{1}\frac{(1-r^{n-2})^{q}r^{a-1}}{(1+rt)^{n+a}}F\left(\lambda, \frac{n-1}{2};n-1; \frac{4rt}{(1+rt)^2}\right)dr\\
&=\sup_{x\in B^n}(1-t^2)^{a}\int_{0}^{1}(1-r^{n-2})^{q}r^{a-1}F\left(\frac{n+a}{2},\frac{a+2}{2};\frac{n}{2}; r^2t^2\right)dr\\
&= \sup_{x\in B^n}(1-t^2)^{a}
\int_{0}^{1}(1-r^{n-2})^{q}r^{a-1}(1-r^2t^2)^{-a-1}\mathcal{F}(rt)dr\\
&\leq \sup_{x\in B^n}(1-t^2)^{a}
\int_{0}^{1}(1-r^{n-2})^{q}r^{a-1}(1-r^2t^2)^{-a-1}\max_{t\leq 1}\mathcal{F}(rt)dr,
\end{split}
\end{equation}
where $$\mathcal{F}(s)=F\left(-\frac{a}{2},\frac{q(n-2)-2}{2};\frac{n}{2}; s^2\right).$$
 Then by using the identity for the derivative of hypergeometric function
we obtain
\begin{equation}
\begin{split}
&\frac{\partial}{\partial t}F\left(-\frac{a}{2},\frac{q(n-2)-2}{2};\frac{n}{2}; r^2t^2\right)\\
&=-2r^2t\frac{2}{n}\frac{a}{2}\frac{q(n-2)-2}{2}F\left(\frac{q(n-2)-n+2}{2},\frac{q(n-2)}{2};\frac{n+2}{2}; r^2t^2\right)<0,
\end{split}
\end{equation}
 for any $t\in [0,1],$ which implies
\begin{equation}
\begin{split}
\max_{|x|\leq 1}F\left(-\frac{a}{2},\frac{q(n-2)-2}{2};\frac{n}{2}; r^2|x|^2\right)
=F\left(-\frac{a}{2},\frac{q(n-2)-2}{2};\frac{n}{2}; 0\right).
\end{split}
\end{equation}

Finally, according to Lemma~\ref{lema2},  for  $n>3$ the maximal value of the function
\[\begin{split}\mathcal{I}(x)&=\int_{B^{n}}\left|G(x,y)\right|^{q}dy\\&=c_{n}^{q}(1-|x|^2)^{a}
\int_{0}^{1}(1-r^{n-2})^{q}r^{a-1}dr\int_{S}\frac{d\xi}{|rx+\xi|^{2n-q(n-2)}}\end{split}\] is attained for $x=0.$ So,
\begin{equation}
\begin{split}
\|G\|^q:c_n^q=&\sup_{x\in B^n}(1-|x|^2)^{a}\int_{0}^{1}(1-r^{n-2})^{q}r^{a-1}dr\int_{S}\frac{d\xi}{|rx+\xi|^{n+a}}\\
&=\omega_{n-1}\sup(1-|x|^2)^{a}\int_{0}^{1}\frac{(1-r^{n-2})^{q}r^{a-1}}{(1-r^2|x|^2)^{a+1}}\mathcal{F}(r|x|)dr\\
&=\omega_{n-1}\int_{0}^{1}(1-r^{n-2})^{q}r^{n-q(n-2)-1}drF\left(\frac{n+a}{2},\frac{q(n-2)-2}{2};\frac{n}{2}; 0\right)\\
&=\omega_{n-1}\int_{0}^{1}(1-r^{n-2})^{q}r^{a-1}dr=\frac{\omega_{n-1}\Gamma(1+q)\Gamma(\frac{n-q(n-2)}{n-2})}{(n-2)\Gamma(1+q+\frac{n-q(n-2)}{n-2})}.
\end{split}
\end{equation}

(ii) The  case $n=3$ with $1<q\leq 2.$ It is clear that
 $$\mathcal{I}(x)=\int_{B^{3}}\left|G(x,y)\right|^{q}dy=\frac{1}{(2\pi)^q}\int_{B^3}\left(\frac{1}{|x-y|}-\frac{1}{[x,y]}\right)^q dy,$$
 and that the same transforms for $I(x)$ as in the previous general case give
 $$\mathcal{I}(x)=c_{3}(1-x^2)^{3-q}\int_0^1(1-r)^q r^{2-q}F[(6-q)/2,(5-q)/2,3/2,r^2x^2]dr,$$ where $c_{3}$ is appropriate constant as in general case.  Put $t=|x|$.  We can represent $\mathcal{I}(x)$  as
$$\mathcal{I}(x)=c_{3}\int_0^1 \frac{(1-r)^q r^{1-q} \left(1-t^2\right)^{3-q} \left((1-r t)^{-4+q}-(1+r t)^{-4+q}\right)}{2 (4-q) t}dr.$$
So, $$\mathcal{I}(x)=c_{3}\frac{\left(1-t^2\right)^{3-q}}{2 (4-q) t}\sum_{n=0}^\infty t^n\int_0^1 (1 - r)^q r^{1 - q} (r^n-(-r)^n)  \binom{-4 + q}{ n} dr, $$
and this implies  $$\mathcal{I}(x)=c_{3}\frac{\left(1-t^2\right)^{3-q}}{2 (4-q) t}\sum_{n=0}^\infty\frac{\left(-1+e^{i n \pi }\right) \binom{-4+q}{n} \Gamma(2+n-q) \Gamma(1+q)}{\Gamma(3+n)}t^n.$$
Thus $$\mathcal{I}(x)=c_{3}\frac{\pi  (-1+q) q \left(1-t^2\right)^{3-q} (F(2-q,4-q;3;t)-{}F(2-q,4-q;3;-t))}{4\sin(\pi q) (4-q) t}.$$
Let $$c'(q):=c_{3}\frac{2^{-q} \pi ^{2-q}(-1 + q) q } {(4-q)\sin(\pi q)}. $$
Then $$\mathcal{I}(x)/|c'|\le I_1(x)=\frac{(1-t^2)}{t}\left(F(2-q,4-q;3;t)-F(2-q,4-q;3;-t)\right)$$ for $1<q<2$ and
$$I_1(x)=a_0+\sum_{n=1}^\infty {a_n}t^n$$ where $a_0>0$ and $$a_n=\frac{2 \left(1+(-1)^n\right) \Gamma(3+n-q) (-(n-q)! \Gamma(4+n)+\Gamma(n) \Gamma(5+n-q))}{\Gamma(n) \Gamma(2+n) \Gamma(4+n) \Gamma(2-q) \Gamma(4-q)}.$$ Further $a_n\le 0 $ because $$\frac{(1+n-q) (2+n-q) (3+n-q) (4+n-q)}{n (1+n) (2+n) (3+n)}\le 1,$$ which again implies that maximal value of the function $\mathcal{I}(x)$ is attained for the $x=0.$
This finishes the proof of Theorem~\ref{m1}.
\end{proof}
\section{Proof of Theorem~\ref{hrt}}
Let $\Omega$ be a domain of $\mathbf{R}^n$ and let $|\Omega|$ be its volume. For $\mu \in (0,1]$ define the operator $V_{\mu}$ on the space $L^{1}(\Omega)$ by Riesz potential
$$(V_{\mu}f)(x)=\int_{\Omega}|x-y|^{n(\mu-1)}f(y)dy.$$ The operator $V_{\mu}$  is defined for any $f\in L^{1}(\Omega)$ and  $V_{\mu}$ is bounded on $ L^{1}(\Omega),$ or more generally we have the next lemma.
\begin{lemma}\cite[p. 156-159]{gt}.\label{lema12}
Let  $V_{\mu}$ be defined on the $ L^{p}(\Omega),p>0.$ Then  $V_{\mu}$ is continuous as a mapping
$V_{\mu}: L^{p}(\Omega)\rightarrow  L^{q}(\Omega),$ where $1\leq q\leq \infty,$ and
$$0\leq \delta=\delta(p,q)=\frac{1}{p}-\frac{1}{q}<\mu.$$
Moreover, for any $f\in  L^{p}(\Omega)$
$$\|V_{\mu}f\|_{q}\leq \left(\frac{1-\delta}{\mu-\delta}\right)^{1-\delta}(\omega_{n-1}/n)^{1-\mu}|\Omega|^{\mu-\delta}\|f\|_{p}.$$
\end{lemma}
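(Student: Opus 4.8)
The plan is to establish this as a Schur-type $L^p$--$L^q$ bound for the positive kernel $|x-y|^{n(\mu-1)}$, reducing everything to one elementary kernel estimate together with a three-exponent Hölder inequality. First I would replace $f$ by $|f|$, so that it suffices to bound $\int_\Omega |x-y|^{n(\mu-1)}|f(y)|\,dy$, and then record the only geometric input needed: since $n(\mu-1)\le 0$, the function $y\mapsto |x-y|^{n(\mu-1)s}$ is radially decreasing about $x$ for every $s>0$, so by the Hardy--Littlewood rearrangement (bathtub) principle the integral over $\Omega$ is largest when $\Omega$ is replaced by the ball $B$ centred at $x$ with $|B|=|\Omega|$. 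A direct computation in polar coordinates then gives, with $\mu_s:=1-s(1-\mu)$ and provided $\mu_s>0$,
$$\int_\Omega |x-y|^{n(\mu-1)s}\,dy\le \frac{1}{\mu_s}\Big(\frac{\omega_{n-1}}{n}\Big)^{1-\mu_s}|\Omega|^{\mu_s},$$
uniformly in $x$, and symmetrically in $y$.

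The heart of the argument is the exponent bookkeeping in a triple Hölder estimate. Writing $\delta=1/p-1/q$ and assuming first $1\le p<q<\infty$, I would split the integrand as
$$|x-y|^{n(\mu-1)}|f(y)| = |x-y|^{n(\mu-1)\alpha}\cdot\big(|x-y|^{n(\mu-1)\beta}|f(y)|^{p/q}\big)\cdot |f(y)|^{1-p/q},$$
and apply Hölder with the three conjugate exponents $a=p'=p/(p-1)$, $b=q$ and $c=pq/(q-p)$, whose reciprocals indeed sum to $1$. The powers $\alpha,\beta$ are fixed by the two compatibility requirements $\alpha+\beta=1$ and $\alpha a=\beta q$; solving them forces the common value $s:=\alpha a=\beta q=1/(1-\delta)$. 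With this choice the first factor, raised to $1/a$, is bounded by the kernel estimate above and is independent of $x$; the third factor equals $\|f\|_p^{\,1-p/q}$ because $(1-p/q)c=p$; and the middle factor is tailored so that, after raising $\int(\cdot)$ to the power $q$ in $x$ and applying Tonelli, the $x$-integral of $|x-y|^{n(\mu-1)\beta q}$ is again controlled by the same kernel estimate while the remaining factor $|f(y)|^{(p/q)q}=|f(y)|^{p}$ integrates to $\|f\|_p^p$.

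Collecting the three contributions, the total power of $\|f\|_p$ is $(1-p/q)+p/q=1$ and the constant is $I_s^{1/a+1/q}$, where $I_s$ denotes the right-hand side of the displayed kernel bound. Since $1/a+1/q=1-\delta$ and $s=1/(1-\delta)$ yields $\mu_s=(\mu-\delta)/(1-\delta)$, substituting gives exactly
$$\mu_s^{-(1-\delta)}=\Big(\frac{1-\delta}{\mu-\delta}\Big)^{1-\delta},\qquad \mu_s(1-\delta)=\mu-\delta,\qquad (1-\mu_s)(1-\delta)=1-\mu,$$
so the three algebraic identities reproduce the three factors in the asserted inequality. The degenerate cases are handled by letting the split collapse: when $\delta=0$ (that is $p=q$) the third factor disappears and the estimate is a plain Schur/Young bound with $s=1$, while $q=\infty$ (with $\delta=1/p<\mu$) follows from a single application of Hölder in $y$ with exponent $p'$, which is the same formula read at $1-\delta=1/p'$.

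The step I expect to be the real obstacle is precisely this exponent calibration: one must check simultaneously that the kernel powers sum to $1$, the weights on $|f|$ sum to $1$, the three Hölder exponents are conjugate, and that the two kernel integrals (in $y$, and in $x$ after Tonelli) carry the identical exponent $s=1/(1-\delta)$ so that a single estimate serves both. The only analytic constraint that can fail is the convergence of the rearranged kernel integral, which requires $\mu_s=(\mu-\delta)/(1-\delta)>0$; this is equivalent to the hypothesis $\delta<\mu$ and is exactly where that assumption is consumed.
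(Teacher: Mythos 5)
Your proof is correct, and it coincides with the argument the paper invokes by citation: the paper gives no proof of this lemma, referring instead to Gilbarg--Trudinger (Lemma 7.12 there), whose proof is precisely your scheme --- symmetrization of $\Omega$ to a ball centred at $x$ for the kernel integral, the three-exponent H\"older split with conjugate exponents $p'$, $q$, $pq/(q-p)$ and common kernel power $s=1/(1-\delta)$, and the algebra $\mu_s=(\mu-\delta)/(1-\delta)$ yielding the stated constant. The only caveat is that the H\"older step requires $p\ge 1$, which you implicitly assume; the lemma's hypothesis ``$p>0$'' is a looseness in the paper's statement, not in your argument.
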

\begin{remark}
If instead of Riesz potential we consider the solution operator for a domain $\Omega$ with finite volume, then the operator is in general non-bounded. However it is bounded, if the boundary of $\Omega$ is enough regular. See \cite{jk} for an essential approach to the solution of this problem.
\end{remark}
\begin{theorem}
Let  $\|\mathcal G\|_1:=\|{\mathcal G}:L^{1}(B)\rightarrow L^{1}(B)\|,$ then

$$\|\mathcal G\|_1= \frac{1}{2n}.$$

\end{theorem}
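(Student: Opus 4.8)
The plan is to reduce the computation of $\|\mathcal{G}\|_1$ to the elementary identity already recorded in the Remark following Theorem~\ref{m1}, namely $\int_{B^n}G(x,y)\,dy=\frac{1}{2n}(1-|x|^2)$, together with the symmetry of the Green function, via the standard $L^1\to L^1$ duality for a nonnegative kernel.

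First I would record two structural facts about $G$. Since $G(x,y)=c_n\big(|x-y|^{-(n-2)}-[x,y]^{-(n-2)}\big)$ and the quantity $[x,y]=|x|y|-y/|y||$ is visibly symmetric in its two arguments, the kernel is symmetric, $G(x,y)=G(y,x)$, and it is nonnegative on $B^n\times B^n$ (it is a Green function). Combining the symmetry with the Remark gives
\begin{equation}\label{symid}
\int_{B^n} G(x,y)\,dx = \int_{B^n} G(y,x)\,dx = \frac{1}{2n}\,(1-|y|^2),\qquad y\in B^n.
\end{equation}

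Next I would establish the general operator-norm identity $\|\mathcal{G}\|_1=\sup_{y\in B^n}\int_{B^n}G(x,y)\,dx$. For the upper bound, for $g\in L^1(B^n)$ Tonelli's theorem (legitimate since $G\ge 0$) yields
$$
\|\mathcal{G}[g]\|_1 \le \int_{B^n}\!\int_{B^n} G(x,y)\,|g(y)|\,dy\,dx = \int_{B^n} |g(y)|\Big(\int_{B^n} G(x,y)\,dx\Big)dy \le \Big(\sup_{y\in B^n}\int_{B^n} G(x,y)\,dx\Big)\|g\|_1.
$$
For the matching lower bound I would test $\mathcal{G}$ on an approximate identity concentrated at the origin: set $g_\rho=|B(0,\rho)|^{-1}\chi_{B(0,\rho)}$, so that $\|g_\rho\|_1=1$; since $G\ge 0$, Tonelli again gives $\|\mathcal{G}[g_\rho]\|_1=\int_{B(0,\rho)} g_\rho(y)\big(\int_{B^n} G(x,y)\,dx\big)dy$, which by \eqref{symid} converges to $\frac{1}{2n}$ as $\rho\to 0$. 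Substituting \eqref{symid} into the operator-norm identity then gives
$$
\|\mathcal{G}\|_1=\sup_{y\in B^n}\frac{1}{2n}\,(1-|y|^2)=\frac{1}{2n},
$$
the supremum being approached as $|y|\to 0$.

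The only genuinely delicate point is the lower bound: the supremum defining the $L^1$ norm is not attained by any single $L^1$ function, so one must pass through test functions concentrating near the origin and verify that $y\mapsto\int_{B^n} G(x,y)\,dx$ is continuous at $y=0$, so that the limit of $\|\mathcal{G}[g_\rho]\|_1$ equals $\frac{1}{2n}$; this continuity is immediate from the explicit formula \eqref{symid}. Everything else is the routine $L^1\to L^1$ argument for a nonnegative kernel, and the symmetry of $G$ is what lets us recycle the computation already used for the $p=\infty$ case.
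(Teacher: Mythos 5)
Your proof is correct, but it takes a genuinely different route from the paper's. The paper argues by duality: it first invokes Lemma~\ref{lema12} (the Gilbarg--Trudinger bound for Riesz potentials, which dominates $|G(x,y)|\le c_n|x-y|^{2-n}$) to know that $\mathcal G$ is bounded on $L^1$ at all, then uses the symmetry $G(x,y)=G(y,x)$ to identify the adjoint $\mathcal G^{*}$ with $\mathcal G$ itself, and concludes $\|\mathcal G\|_{L^1\to L^1}=\|\mathcal G^{*}\|_{L^\infty\to L^\infty}=\|\mathcal G\|_{L^\infty\to L^\infty}=\frac{1}{2n}$ by quoting the case $p=\infty$ of Theorem~\ref{m1}. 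You instead prove from scratch the standard kernel formula $\|\mathcal G\|_1=\sup_{y\in B^n}\int_{B^n}G(x,y)\,dx$ for a nonnegative kernel: Tonelli gives the upper bound, which simultaneously establishes $L^1$-boundedness with the sharp constant (so you never need Lemma~\ref{lema12} or the abstract adjoint identity), and normalized indicators shrinking to the origin give the matching lower bound --- and you correctly flag that the supremum of $\frac{1}{2n}(1-|y|^2)$ is not attained by any extremal function, which is exactly why the approximate-identity step is needed. Both arguments ultimately rest on the same two facts, the symmetry of $G$ and the explicit identity $\int_{B^n}G(x,y)\,dy=\frac{1}{2n}(1-|x|^2)$ from the Remark following Theorem~\ref{m1}, so they are in effect dual formulations of one computation; yours is more self-contained and elementary, while the paper's is shorter given the duality machinery but quietly relies on a priori boundedness to make sense of the adjoint. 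One cosmetic point: you justify $G\ge 0$ only by the remark that it is a Green function; for completeness note that $[x,y]^2-|x-y|^2=(1-|x|^2)(1-|y|^2)\ge 0$ for $x,y\in B^n$, hence $[x,y]\ge |x-y|$ and $G(x,y)\ge 0$ follows directly from the explicit formula.
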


\begin{proof}
According to Theorem~\ref{m1} we have $$\|\mathcal G\|_{L^{\infty}\rightarrow L^{\infty}}=\frac{1}{2n}.$$
On the other hand, Lemma~\ref{lema12} states that $\mathcal G:L^{1}\rightarrow L^{1}$ is bounded.  Then
$$\|\mathcal G\|_{L^{1}\rightarrow L^{1}}=\|\mathcal G^{\ast}\|_{L^{\infty}\rightarrow L^{\infty}},$$
where $\mathcal G^{\ast}$ is appropriate adjoint  operator. Since $$\mathcal G^{\ast}f(x)=\int_{B^n}\overline{G(y,x)}f(y)dy=\int_{B^n}G(x,y)f(y)dy,f\in L^{\infty}(B),$$ we have $$\|\mathcal G\|_{L^{1}\rightarrow L^{1}}=\|\mathcal G\|_{L^{\infty}\rightarrow L^{\infty}}.$$
\end{proof}
In the sequel we are going to observe Hilbert case $p=2,\mathcal G:L^{2}(B)\rightarrow L^{2}(B).$ It is well-known that $\mathcal{G}^{-1}=-\triangle$ on the Sobolev space $H_0^1(\Omega)$, so the Hilbert norm  $\mathcal{G}$ is precisely the reciprocal value of the norm of $-\triangle$ (c.f. \cite{akl}). So we have the following theorem, whose proof is included for the sake of completeness.
\begin{theorem}
Let $\|\mathcal{G}\|_2:=\|\mathcal G:L^{2}(B^n)\rightarrow L^{2}(B^n)\|,$ then
$$\|\mathcal G\|_2=\frac{1}{\lambda_{1}}.$$ Thus
\begin{equation}\label{ok}
\|\mathcal G g\|_{2}\leq\frac{1}{\lambda_{1}}\|g\|_{2},\enspace g\in L^{2}(B^n).
\end{equation}
Equality is attained in \eqref{ok} for $g(x)=c\varphi_{1}(x),a.e.\enspace x\in B^n$  where $c$ is a real constant.
\end{theorem}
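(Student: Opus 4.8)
The plan is to diagonalize $\mathcal G$ against the orthonormal basis $(\varphi_k)_k$ of Dirichlet eigenfunctions introduced in Subsection~\ref{eine} and to read off the operator norm as the largest eigenvalue. First I would record that $\mathcal G$ is self-adjoint on $L^2(B^n)$: the Green kernel is real and symmetric, $G(x,y)=G(y,x)$, so for $f,g\in L^2(B^n)$ we have $\langle \mathcal G f, g\rangle=\langle f,\mathcal G g\rangle$, i.e.\ $\mathcal G=\mathcal G^{\ast}$.

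Next, since $\mathcal G^{-1}=-\triangle$ on $H_0^1(B^n)$, applying $\mathcal G$ to the eigenvalue equation $-\triangle\varphi_k=\lambda_k\varphi_k$ gives $\varphi_k=\lambda_k\,\mathcal G\varphi_k$, hence $\mathcal G\varphi_k=\frac{1}{\lambda_k}\varphi_k$. Thus the very basis that diagonalizes $-\triangle$ diagonalizes $\mathcal G$, with eigenvalues $1/\lambda_k$, all positive and tending to $0$. For arbitrary $g\in L^2(B^n)$ I would then write $g=\sum_k c_k\varphi_k$ with $c_k=\langle g,\varphi_k\rangle$ and $\|g\|_2^2=\sum_k c_k^2$; by boundedness of $\mathcal G$ and the diagonalization, $\mathcal G g=\sum_k \frac{c_k}{\lambda_k}\varphi_k$, so
$$\|\mathcal G g\|_2^2=\sum_k\frac{c_k^2}{\lambda_k^2}\le\frac{1}{\lambda_1^2}\sum_k c_k^2=\frac{1}{\lambda_1^2}\|g\|_2^2,$$
using $\lambda_1\le\lambda_k$ for every $k$. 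This yields \eqref{ok} together with $\|\mathcal G\|_2\le 1/\lambda_1$. Taking $g=c\varphi_1$ turns the inequality into an equality, since $\mathcal G\varphi_1=\frac{1}{\lambda_1}\varphi_1$ gives $\|\mathcal G\varphi_1\|_2=\frac{1}{\lambda_1}\|\varphi_1\|_2$; this establishes both $\|\mathcal G\|_2=1/\lambda_1$ and the stated equality case.

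The main obstacle is more bookkeeping than genuine difficulty: one must justify the identity $\mathcal G^{-1}=-\triangle$ rigorously, checking that $\mathcal G$ indeed maps $L^2(B^n)$ into $H_0^1(B^n)$ and that $(\varphi_k)_k$ is a \emph{complete} orthonormal system, so that the expansion of $g$ exhausts all of $\mathcal G$ and leaves no residual spectrum outside $\{1/\lambda_k\}$. Completeness is exactly what guarantees that $\sup_k 1/\lambda_k=1/\lambda_1$ is the full spectral radius; the termwise application of $\mathcal G$ to the series is then legitimate by continuity of $\mathcal G$ and the $L^2$-convergence of $\sum_k c_k\varphi_k$.
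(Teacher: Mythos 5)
Your proposal is correct and takes essentially the same route as the paper's proof: expand $g$ in the orthonormal Dirichlet eigenbasis $(\varphi_k)$ from Subsection~\ref{eine}, apply $\mathcal G$ termwise by boundedness, use $\mathcal G\varphi_k=\pm\frac{1}{\lambda_k}\varphi_k$ together with Parseval to obtain $\|\mathcal G g\|_2^2=\sum_k |\langle g,\varphi_k\rangle|^2/\lambda_k^2\le \lambda_1^{-2}\|g\|_2^2$, and realize equality at $g=c\varphi_1$. The completeness of $(\varphi_k)$ and the relation $\mathcal G^{-1}=-\triangle$ on $H_0^1$, which you flag as the remaining bookkeeping, are exactly the facts the paper records beforehand (in Subsection~\ref{eine} and in the discussion preceding the theorem, citing \cite{akl}) and then uses in the same way.
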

\begin{proof}
If $f\in L^{2}(B^n),$ then under the previous notation $$f(x)=\sum_{k=1}^{\infty}\left<f,\varphi_{k}\right>\varphi_{k}(x).$$ Since $\mathcal G$ is bounded, we have
$$\mathcal G[f]=\sum_{k=1}^{\infty}\left<f,\varphi_{k}\right>\mathcal G[\varphi_{k}].$$
Also,
$$\mathcal G[\varphi_{k}]=\frac{1}{\lambda_{k}}\mathcal G[\triangle\varphi_{k}]=-\frac{1}{\lambda_{k}}\varphi_{k}.$$  The fact that  $(\varphi_{k})$ is orthonormal implies
$$\|\mathcal G f\|_{2}^{2}=\sum_{k=1}^{\infty}\frac{|\left<f,\varphi_{k}\right>|^{2}}{\lambda_{k}^{2}}.$$
Since $\lambda_1$ is a simple eigenvalue and $0<\lambda_{1}<\lambda_{2}\leq...,$ we have
$$ \|\mathcal G f\|_{2}\leq \frac{1}{\lambda_1}\|f\|_2.$$ Finally,
$$\|\mathcal G \|_{2}=\frac{1}{\lambda_1}.$$
\end{proof}
By using the Ries-Thorin interpolation theorem \cite{thorin}, we obtain the following estimates of the norm of  the operator $\mathcal G:L^p\to L^p.$

Let us denote by $\|{\mathcal G}\|_{L^{1}\rightarrow L^{1}}=\|{\mathcal G}\|_{L^{\infty}\rightarrow L^{\infty}}=\|\mathcal G\|_{1}$ and
$\|{\mathcal G}\|_{L^{2}\rightarrow L^{2}}=\|{\mathcal G}\|_{2}.$ Then
$$\|{\mathcal G}\|_p\leq \|{\mathcal G}\|_{1}^{\frac{2-p}{p}}\|{\mathcal G}\|_{2}^{\frac{2(p-1)}{p}}=(2n)^{\frac{p-2}{p}}\lambda_1^{\frac{2(1-p)}{p}},$$
where $\|{\mathcal G}\|_p$ represents the norm of the operator ${\mathcal G}:L^{p}(B^{n})\rightarrow L^{p}(B^n)$, $1<p<2.$
Similarly,
 $$\|{\mathcal G}\|_p\leq \|{\mathcal G}\|_{2}^{\frac{2}{p}}\|{\mathcal G}\|_{1}^{\frac{p-2}{p}}=\lambda_1^{-\frac{2}{p}}(2n)^{\frac{p-2}{p}},$$
 where ${\mathcal G}:L^{p}(B^{n})\rightarrow L^{p}(B^n),$ $2<p<\infty.$
This yields the proof of Theorem~\ref{hrt}.

\end{document}